\title{The density of twins of $k$-free numbers}
\date{}
\subjclass[2010]{Primary 11N25; Secondary 11D45}
\author{Rainer Dietmann}
\address{Department of Mathematics, Royal Holloway University of London,
Egham TW20 0EX, UK}
\email{Rainer.Dietmann@rhul.ac.uk}
\author{Oscar Marmon}
\address{Mathematisches Institut \\ Georg-August-Universität Göttingen \\ Bunsenstr.~ 3-5 \\ 37073 Göttingen \\ Germany}
\email{omarmon@uni-math.gwdg.de}
\begin{document}

\renewcommand{\bf}{\mathbf}
\newtheorem{thm}{Theorem}
\newtheorem{lemma}{Lemma}
\theoremstyle{remark}
\newtheorem*{rem*}{Remark}
\newcommand{\epsi}{\varepsilon}
\newcommand{\ZZ}{\mathbb{Z}}
\newcommand{\RR}{\mathbb{R}}
\newcommand{\NN}{\mathbb{N}}
\newcommand{\CC}{\mathbb{C}}
\newcommand{\xx}{\mathbf{x}}
\newcommand{\yy}{\mathbf{y}}
\newcommand{\cN}{\mathcal{N}}

\newcommand{\en}{\mathbb{N}}
\newcommand{\qu}{\mathbb{Q}}
\newcommand{\zet}{\mathbb{Z}}
\newcommand{\ce}{\mathbb{C}}
\newcommand{\er}{\mathbb{R}}
\newtheorem{theorem}{Theorem}
\newtheorem{corollary}{Corollary}

\begin{abstract}
For $k \geq 2$, we consider the number $A_k(Z)$ of positive integers $n \leq Z$ such that both $n$ and $n+1$ are $k$-free. 
We prove an asymptotic formula $A_k(Z) = c_k Z + O(Z^{14/(9k)+\epsilon})$, where the error term improves upon previously known estimates. The main tool used is the approximative determinant method of Heath-Brown.
\end{abstract}

\maketitle

\section{Introduction}

Let $k \geq 2$ be a natural number. A positive integer is called $k$-free if it is not divisible by the $k$-th power of any prime. It is well known that the set of $k$-free numbers has positive density. Indeed, denoting by $\mu_k(n)$ the characteristic function for the set of $k$-free numbers, 
\[
\mu_k(n) = \begin{cases}
           0 & \text{ if } p^k \mid n \text{ for some prime } p\\
           1 & \text{ otherwise},
           \end{cases}
\]
it is easy to prove the asymptotic formula
\[
\sum_{n \leq Z} \mu_k(n) = \frac{1}{\zeta(k)} Z + O(Z^{1/k}). 
\]

More generally, let $A_k(Z)$ be the number of positive integers $n \leq Z$ such that both $n$ and $n+1$ are $k$-free, that is,
\begin{equation*}
\label{eq:A_k_identity}
A_k(Z) = \sum_{n \leq Z} \mu_k(n)\mu_k(n+1). 
\end{equation*}
Our main result is an asymptotic formula for $A_k(Z)$.

\begin{thm}
\label{thm:main}
We have
\begin{equation}
\label{eq:asymptoticformula}
A_k(Z) = c_k Z + O_{k,\epsi}\left(Z^{\frac{14}{9k}+\epsi}\right)
\end{equation}
for any $\epsi>0$, where
\[
c_k = \prod_p\left(1-\frac{2}{p^k}\right). 
\]
\end{thm}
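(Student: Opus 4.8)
The plan is to open up both Möbius functions using $\mu_k(m)=\sum_{d^k\mid m}\mu(d)$ and to interchange summations, which gives
\[
A_k(Z)=\sum_{\substack{a,b\ge 1\\ \gcd(a,b)=1}}\mu(a)\mu(b)\,N(a,b),\qquad
N(a,b):=\#\{n\le Z:\ a^k\mid n,\ b^k\mid n+1\},
\]
the coprimality being automatic, since a common prime of $a$ and $b$ would divide both $n$ and $n+1$. By the Chinese Remainder Theorem $N(a,b)=Z/(ab)^k+O(1)$, and $\sum_{\gcd(a,b)=1}\mu(a)\mu(b)(ab)^{-k}=\prod_p(1-2p^{-k})=c_k$; hence restricting to $\max(a,b)\le W$ already produces the main term $c_kZ$, with error $O(Z/W^{k-1})$ from the tail of the $c_k$-product and $O(W^2)$ from the $O(1)$-terms --- the latter improvable to $O(Z^{1/k+\epsi})$ once one exploits that $N(a,b)\le 1$ as soon as $(ab)^k>Z$. (Equivalently, one is counting $k$-free values of the polynomial $P(n)=n(n+1)$, and $c_k$ is the associated singular series.)

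First I would reduce the tail $\max(a,b)>W$ to a twin-divisor counting problem. Among these terms, those with $(ab)^k\le Z$ cost only $O(Z^{1/k+\epsi})$, and after dropping them and decomposing dyadically the matter comes down --- up to a factor $Z^{\epsi}$ absorbing the bounded multiplicity of admissible factorisations of a given $n$ --- to estimating
\[
\mathcal T(A,B):=\#\bigl\{n\le Z:\ \exists\,a\sim A\ \text{with}\ a^k\mid n,\ \ \exists\,b\sim B\ \text{with}\ b^k\mid n+1\bigr\}
\]
over dyadic blocks $A,B$ and summing up. In the blocks where the dyadic decomposition only furnishes a large $k$-th power divisor on one side I would reinsert a truncated Möbius expansion of the $k$-freeness of the other argument in order to manufacture a second divisor of controlled size; this is the fiddliest part of the reduction. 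The trivial bound $\mathcal T(A,B)\ll Z/(AB)^{k-1}+AB$ reproduces only an estimate of the classical (Mirsky-type) strength, and the improvement has to come from treating $\mathcal T(A,B)$ by the determinant method.

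The crux is therefore a power-saving bound for $\mathcal T(A,B)$. Writing $n=a^k s$ and $n+1=b^k t$, an $n$ counted by $\mathcal T(A,B)$ produces an integral point $(a,b,s,t)$, with $a\sim A$, $b\sim B$, $s\le Z/A^k$, $t\le Z/B^k$, on the affine surface $b^k t-a^k s=1$. Fixing the cofactors $s,t$ (necessarily coprime) this is the plane curve $\mathcal C_{s,t}\colon b^k t-a^k s=1$ of degree $k$; its projective closure $b^k t-a^k s=u^k$ is a smooth plane curve of Fermat type, so $\mathcal C_{s,t}$ is absolutely irreducible for every admissible $s,t$. Heath-Brown's approximative determinant method for curves --- the uniform form of the Bombieri--Pila bound, with an implied constant depending only on the degree --- then yields
\[
\#\bigl(\mathcal C_{s,t}\cap\bigl((A,2A]\times(B,2B]\bigr)\bigr)\ll_{k,\epsi}\max(A,B)^{1/k+\epsi}
\]
uniformly in $s,t$. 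Summing over the cofactor ranges (and, where profitable, restricting them via $\gcd(s,t)=1$ and $a^k s,b^k t\le Z$, or instead fixing $b,t$ and counting the $O(Z^{\epsi})$ divisors $a$ of a given $n$) gives a bound for $\mathcal T(A,B)$ of the shape $Z^{\epsi}\bigl(Z/(AB)^{k-1}+(\text{determinant term})\bigr)$ which beats the trivial one precisely when $A$ and $B$ are not too small.

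Finally I would feed this into the dyadic sum and optimise the cut-off $W$, together with the auxiliary truncation parameters from the one-sided ranges, against the three competing contributions --- the main-term tail $Z/W^{k-1}$, the error count $\ll\min(W^2,Z^{1/k+\epsi})$, and the determinant term --- which after balancing all the dyadic ranges at once should produce the exponent $14/(9k)$ in \eqref{eq:asymptoticformula}. I expect the principal obstacle to be exactly this last stage combined with the precise form of the determinant estimate: one must ensure that the curve count is genuinely uniform over the whole family $\{\mathcal C_{s,t}\}$ of twisted Fermat curves, that the summation over cofactors does not reintroduce the very losses one is trying to remove, and that all the parameters can be balanced simultaneously to the single clean exponent $14/(9k)$ --- with the manufacturing of a second large divisor in the one-sided ranges as the remaining delicate point.
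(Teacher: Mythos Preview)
Your reduction to the Diophantine equation $b^kt-a^ks=1$ is the right starting point, and it coincides with the paper's setup (with the roles of ``divisors'' and ``cofactors'' matching the paper's $(x,y)$ and $(a,b)$). However, the specific determinant-method input you propose --- fix the cofactors $(s,t)$ and apply the uniform Bombieri--Pila/Thue bound to the resulting degree-$k$ curve in $(a,b)$ --- is not strong enough to reach the exponent $14/(9k)$; in fact it recovers only the classical exponent $2/(k+1)$. To see this, note that with $a\sim A$, $b\sim B$, $A\le B\le Z^{1/k}$, the number of admissible cofactor pairs is $\asymp Z^2/(AB)^k$, and the curve bound per pair is at best $O_k(1)$ (Evertse, for $k\ge 3$), giving $\mathcal T(A,B)\ll Z^2/(AB)^k$. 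Combined with the alternative divisor estimate $\mathcal T(A,B)\ll Z^{1+\epsi}/B^{k-1}$, the minimum of the two, maximised over the region $AB>P$, $A\le B\le Z^{1/k}$, is $\asymp Z^{2/(k+1)}$ at the balance point $P=Z^{2/(k+1)}$. The loss is structural: once the cofactors are fixed there is nothing left to average, and the sheer number of cofactor pairs swallows any saving from the curve count. Your proposed fix for the ``one-sided'' ranges (reinserting a truncated M\"obius expansion to manufacture a second large divisor) cannot help here, because the obstruction already occurs in the two-sided range where both $A$ and $B$ are of moderate size.

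What the paper does instead is apply the determinant method \emph{at the level of the surface}, in the bihomogeneous form of Heath-Brown (2012). One slices not in the cofactors but in the ratio $x/y$ of the divisors, covering its range by $O(MX/Y)$ short intervals of length $M^{-1}$; on each interval the method produces a nontrivial bihomogeneous auxiliary form $B_I(a,b;x,y)=0$, and then a lattice/shortest-vector argument (counting how often the relevant lattice has a short vector of a given size) controls the contribution of each interval. The upshot is the estimate $N(X,Y,Z)\ll_{\epsi} Z^{\epsi}(XM^{1/2}+Y)$ with $\log M\approx \tfrac{9}{2}\,(\log A)(\log Y)/\log Z$, and it is precisely the square-root saving in $M$ --- unavailable if one fixes cofactors --- that, after optimisation, yields $14/(9k)$. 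So the missing idea is to run the determinant method jointly in all four variables via the $x/y$-slicing and the subsequent lattice reduction, rather than curve by curve.
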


By elementary methods, one may obtain \eqref{eq:asymptoticformula} with the error term replaced by $O(Z^{2/(k+1)+\epsi})$. Such an asymptotic formula has been known at least since the 1930's (see \cite{Bruedern-Perelli-Wooley00} for a discussion of early references). We shall refer to $2/(k+1) + \epsi$ as the trivial exponent. In the case $k=2$, Heath-Brown \cite{Heath-Brown84} improved the exponent $2/3+\epsi$ to $7/11+\epsi$, using the so-called square sieve. Brandes \cite{Brandes09} adapted this method to arbitrary $k$, obtaining an improvement upon the trivial exponent which is of order $1/k^2$ as $k\to \infty$ (see \cite{Brandes14} for a corrected value of the exponent appearing in \cite{Brandes09}). In a recent preprint, Reuss \cite{Reuss12} gives substantial improvements for small values of $k$, proving the asymptotic formula \eqref{eq:asymptoticformula} with error term $O(Z^{\omega(k)+\epsi})$, where in particular $\omega(2) \approx 0.578$ and $\omega(3) \approx 0.391$. However, whereas in previous results, the exponent approaches the trivial one as $k\to \infty$, the error term in Theorem \ref{thm:main} exhibits a saving of order $1/k$ in the exponent. Our result improves 
upon previously known bounds for $k \geq 6$.

For technical reasons, we shall work with the quantity $A^*_k(Z) = A_k(2Z)-A_k(Z)$ rather than $A_k(Z)$ itself. We shall prove the asymptotic formula
\[
A^*_k(Z) = c_k Z + O_\epsi\left(Z^{\frac{14}{9k}+\epsi}\right),
\]
from which \eqref{eq:asymptoticformula} follows by dyadic summation. (Here, and henceforth in the paper, we suppress the dependence on $k$ in any implied constants.) The proof of this asymptotic formula relies upon an estimate for the density of solutions to a certain Diophantine equation. 

Our initial considerations follow the treatment in \cite{Heath-Brown84}. Using the relation 
\[
\mu_k(n) = \sum_{x^k \mid n} \mu(x), 
\]
we have
\[
A^*_k(Z) = \sum_{Z < n \leq 2Z} \mu_k(n)\mu_k(n+1) = \sum_{x,y} \mu(x)\mu(y) M(x,y,Z),  
\]
where $M(x,y,Z)$ is the number of positive integers $Z < n \leq 2Z$ such that $x^k\mid n+1$ and $y^k \mid n$. By the Chinese Remainder Theorem we have
\[
M(x,y,Z) = \begin{cases}
            \dfrac{Z}{(xy)^k} + O(1) & \text{if } (x,y) = 1,\\
            0 & \text{otherwise}.       
           \end{cases}
\]

First we consider terms with $xy \leq P$, where $P \in [Z^{1/k},Z]$ is a parameter to be specified at a later stage. We have
\begin{multline*}
\sum_{xy \leq P} \mu(x)\mu(y) M(x,y,Z) = Z \sum_{\substack{xy \leq P\\(x,y) = 1}} \frac{\mu(x)\mu(y)}{(xy)^k} + O\left(\sum_{xy \leq P} 1\right) \\
\begin{aligned}
&= Z \sum_{n=1}^\infty \frac{\mu(n) d(n)}{n^k} + O\left(Z \sum_{n>P} \frac{d(n)}{n^k}\right) + O\left(\sum_{n \leq P} d(n) \right)\\
&= c_k Z + O_\epsi\left(Z^{1+\epsi} P^{-(k-1)} \right) + O_\epsi\left(P^{1+\epsi}\right),
\end{aligned}
\end{multline*}
where both error terms are bounded by $O_\epsi\left(PZ^\epsi\right)$, by our assumption on $P$.

We partition the remaining range for $(x,y)$ into $O(\log Z)^2$ boxes of the form $(X,2X]\times(Y,2Y]$. The contribution from each of these may be bounded by
\[
\sum_{\substack{X<x\leq 2X\\ Y<y\leq 2Y}} M(x,y,Z) \leq N(X,Y,Z),
\]
where $N(X,Y,Z)$ is the number of quadruples $(a,b,x,y)\in\NN^4$ satisfying
\begin{equation}
\label{eq:dioph}
\begin{gathered}
ax^k-by^k=1, \\
X< x \leq 2X, \quad Y< y\leq 2Y \quad \text{and} \quad Z< by^k\leq 2Z.
\end{gathered}
\end{equation}
Our preliminary considerations may thus be summarized in the following result.

\begin{lemma}
\label{lem:cutoff}
For any $P \in [Z^{1/k},Z]$, we have
\begin{align*} 
A^*_k(Z) -c_k Z \ll_\epsi Z^\epsi\left(P + \max_{XY \gg P} N(X,Y,Z)\right).
\end{align*}
\end{lemma}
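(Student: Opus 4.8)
The plan is to assemble the ingredients already assembled in the discussion preceding the lemma. Starting from the identity $A^*_k(Z) = \sum_{x,y}\mu(x)\mu(y)M(x,y,Z)$ together with the Chinese Remainder Theorem evaluation $M(x,y,Z) = Z(xy)^{-k} + O(1)$ when $(x,y)=1$ and $M(x,y,Z)=0$ otherwise, I would split the sum over $(x,y)$ at the cutoff $xy = P$, treating the two ranges separately.

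For the range $xy \leq P$, I would carry out the elementary computation sketched above: replace $M$ by its main term and extend the coprimality-weighted sum to a full Dirichlet series, $\sum_{n\geq 1}\mu(n)d(n)n^{-k} = \prod_p(1-2p^{-k}) = c_k$, so that the main term is exactly $c_kZ$. The error incurred consists of the tail $Z\sum_{n>P}d(n)n^{-k}$ together with the accumulated $O(1)$ terms $\sum_{n\leq P}d(n)$; using $d(n)\ll_\epsi n^\epsi$ and the hypothesis $P\in[Z^{1/k},Z]$, both are $\ll_\epsi PZ^\epsi$. This accounts for the $c_kZ$ and the $P$ contributions in the stated bound.

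For the range $xy > P$, I would first observe that $M(x,y,Z)$ vanishes unless $y^k\leq 2Z$ and $x^k\leq 2Z+1$, so only $x,y\ll Z^{1/k}$ survive; hence the relevant $(x,y)$ lie in the region $P < xy \ll Z^{2/k}$, which is covered by $O((\log Z)^2)$ dyadic boxes $(X,2X]\times(Y,2Y]$ with $XY\gg P$. On each such box $|\mu(x)\mu(y)|\leq 1$, and every $n$ counted by $M(x,y,Z)$ yields, via $n = by^k$ and $n+1 = ax^k$, a quadruple $(a,b,x,y)\in\NN^4$ solving $ax^k - by^k = 1$ with $X<x\leq 2X$, $Y<y\leq 2Y$ and $Z<by^k\leq 2Z$ — and $(x,y)=1$ is automatic from the equation — so $\sum_{X<x\leq 2X,\,Y<y\leq 2Y} M(x,y,Z) \leq N(X,Y,Z)$. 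Summing over the boxes, this range contributes $\ll (\log Z)^2 \max_{XY\gg P} N(X,Y,Z) \ll_\epsi Z^\epsi \max_{XY\gg P} N(X,Y,Z)$. Combining the two ranges gives the lemma.

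There is no serious obstacle in this lemma itself; its entire content is bookkeeping, and the genuine difficulty of the paper — bounding $N(X,Y,Z)$ by the approximative determinant method — is deferred to the sequel. The only points that require a little care are (i) using the a priori bounds $x,y\ll Z^{1/k}$ so that only $O(\log Z)$ dyadic scales occur in each variable and the resulting $(\log Z)^2$ is absorbed into $Z^\epsi$, and (ii) checking that under $P\geq Z^{1/k}$ the two error terms from the low range really are dominated by $PZ^\epsi$, rather than the reverse.
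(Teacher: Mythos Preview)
Your proposal is correct and follows essentially the same route as the paper: the lemma is merely a summary of the preliminary computations preceding it, namely the split at $xy=P$, the Dirichlet-series evaluation yielding $c_kZ$ with error $O_\epsi(PZ^\epsi)$ on the low range, and the dyadic decomposition into $O((\log Z)^2)$ boxes bounded by $N(X,Y,Z)$ on the high range. Your care points (i) and (ii) match exactly the two places where the hypotheses on $P$ and the a~priori bounds $x,y\ll Z^{1/k}$ are used.
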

In section \ref{sec:Determimant}, we provide an estimate of $N(X,Y,Z)$ by means of the determinant method.


\section{Counting solutions to a Diophantine equation}
\label{sec:Determimant}

We shall now derive an upper bound for the quantity $N(X,Y,Z)$ defined above, where we may assume, in view of Lemma \ref{lem:cutoff}, that 
\begin{equation}
\label{eq:assumptions}
\max(X,Y) \ll Z^{1/k} \quad  \text{and} \quad XY \gg Z^{1/k}. 
\end{equation}
We shall also assume that $X \leq Y$, the case $Y \leq X$ being entirely similar.

Like Reuss \cite{Reuss12}, we shall use a new version of the determinant method, first introduced in a recent paper by Heath-Brown \cite{Heath-Brown12}. If the positive integers $a,b,x,y$ satisfy \eqref{eq:dioph} and the above height restrictions, then, putting
\begin{equation}
\label{eq:dehomogenization}
t = \frac ba, \quad s = \frac xy \quad \text{and} \quad v = \frac{1}{ay^k},
\end{equation}
we have $t = s^k - v$, and our new variables satisfy
\[
\frac{X}{Y} \ll s \ll \frac{X}{Y} \quad \text{and} \quad \frac{1}{AY^k}\ll v \ll \frac{1}{AY^k},
\]
where $A=ZX^{-k}$. For a certain integer parameter $Y \leq M \leq Z$, to be chosen at a later stage, we cover the admissible range for $s$ by small intervals $(s_0,s_0+M^{-1}]$. In order for this to make sense, we note that $M \gg Y/X$ provided that $Z\gg 1$. We shall separately count solutions with $s$ confined to each of these $O(MX/Y)$ subintervals.

Thus, let $I=(s_0,s_0+M^{-1}]$ and let $R = \{\xx_1,\dotsc,\xx_J\}$ be the set of solutions $\xx_j = (a_j,b_j,x_j,y_j)$ to \eqref{eq:dioph} such that $s_j = x_j/y_j \in I$. Furthermore, for suitably chosen positive integers $d,e$, let $f_1,\dotsc,f_H$, where $H = (d+1)(e+1)$, be an enumeration of the monomials in four variables that are bihomogeneous of bidegree $(d,e)$, that is $f_i(x_1,x_2;y_1,y_2) = x_1^{\alpha_i} x_2^{\beta_i}y_1^{\gamma_i}y_2^{\delta_i}$, where $\alpha_i+\beta_i = d$ and $\gamma_i + \delta_i = e$. Following the general procedure of the determinant method, our aim is now to show that the matrix with entries $f_i(\xx_j)$, where $1\leq i \leq H$, $1\leq j\leq J$, has rank less than $H$. Indeed, this ensures the existence of a non-zero bihomogeneous polynomial $B(\xx;\yy)$ of bidegree $(d,e)$ vanishing at every $\xx_j$. As in \cite{Heath-Brown12}, one argues that the coefficients of $B$ may be chosen to have size $O(Z^\kappa)$ for some natural number $\kappa$ depending only on $d$ and $e$.

If $J < H$, the above assertion is trivially true. Otherwise, we choose a subset of $R$ of cardinality $H$ --- without loss of generality we may take $\{\xx^{(1)},\dotsc,\xx^{(H)}\}$ --- and prove that the corresponding 
$H\times H$-subdeterminant
\[
\Delta_1 = \det(f_i(\xx_j))_{1\leq i,j \leq H}
\]
vanishes. Note that, since the value of $\Delta_1$ is an integer, it suffices to prove that $|\Delta_1| < 1$.

Defining $s_j,t_j,v_j$ in the obvious way according to \eqref{eq:dehomogenization}, we have
\begin{equation}
\label{Delta_1}
\Delta_1 = \prod_{j=1}^H a_j^d y_j^e \Delta_2 \ll A^{dH}Y^{eH}|\Delta_2|,
\end{equation}
where $\Delta_2 =  \det \big( f_i(1,t_j,s_j,1) \big) = \det \big( t_j^{\alpha_i},s_j^{\beta_i} \big)$. We may now write $s_j = s_0 + u_j$ and define new polynomials
\[
g_i(u,v):=f_i(1,(s_0+u)^k-v,s_0+u,1).
\]
In this notation, we have
\[
\Delta_2 =  \det \big( g_i(u_j,v_j) \big)_{1\leq i,j \leq H}.
\]
Putting $V = AY^k$, we have $|u_j|\ll M^{-1}$ and $|v_j| \ll V^{-1}$. Furthermore, we note that the polynomials $g_i$ have degree at most $kd + e$ and coefficients of size $O_{k,d,e}(1)$.

We shall now estimate the determinant $\Delta_2$ using Lemma 3 in \cite{Heath-Brown09}. (Unless explicitly stated otherwise, the implied constants occurring in the following calculations are uniform in $d$ and $e$.) Thus, let $m_1,m_2,\dotsc$ be all possible monomials in two variables, enumerated in such a way that $1=M_1 \geq M_2 \geq \dotsb $, where $M_i := m_i(M^{-1},V^{-1})$.
Then, according to Heath-Brown's lemma, we have
\begin{equation}
\label{Delta_2}
\Delta_2 \ll_{H} \prod_{i=1}^H M_i.
\end{equation}
Put $W = M_H^{-1}$. Then the factor $M^{-j}V^{-l}$ occurs in the product $\prod M_i$ if and only if $M^j V^l \leq W$. Furthermore, our assumptions above imply that
\[
1 \ll \frac{\log V}{\log Z} \ll 1, \quad 1 \ll \frac{\log M}{\log Z} \ll 1 \quad \text{and} \quad 1 \ll \frac{\log V}{\log M} \ll 1.
\]

Thus, letting $T$ be the set of $(j,l) \in \NN^2$ that satisfy
\[
j \log M + l \log V \leq \log W,
\]
it follows that
\begin{align*}
H &= \#T = \frac{(\log W)^2}{2\log M \log V} + O\left(\frac{\log W}{\log Z}\right) + O\left(1\right).
\end{align*}
By our assumptions, we have $\log W \gg \log Z$, so we may deduce that
\begin{equation}
\label{eq:log W}
\log W = H^{1/2}(2\log M\log V)^{1/2} + O(\log Z).
\end{equation}
Furthermore, we have
\begin{align*}
\log \left(\prod_{i=1}^H M_i\right) &= -\sum_{(j,l) \in T}(j \log M + l\log V) \\
&= -\frac{(\log W)^3}{3\log M \log V} + O\left(\frac{(\log W)^2}{\log Z}\right),
\end{align*}
and thus, by \eqref{eq:log W},
\[
\log \left(\prod_{i=1}^H M_i\right) = -\frac{2\sqrt{2} H^{3/2}}{3}(\log M\log V)^{1/2} + O(H \log Z).
\]
It follows that
\[
\log |\Delta_2| \leq O_H(1) - \frac{2\sqrt{2} H^{3/2}}{3}(\log M\log V)^{1/2} + O(H \log Z),
\]
so, in view of the estimate \eqref{Delta_1}, we need to show that
\[
dH \log A + eH \log Y \leq \frac{2\sqrt{2} H^{3/2}}{3}(\log M\log V)^{1/2} - O_H(1) - O(H \log Z).
\]

To this end, we begin by fixing the ratio between the degrees $d$ and $e$, putting $e = \lfloor d \log A/\log Y \rfloor$. By our earlier assumptions, we then have $d \ll e \ll d$. It now suffices to show that
\begin{equation}
\label{eq:dlogA}
d \log A \leq d \frac{\sqrt{2}}{3}\left(\frac{\log A}{\log Y}\right)^{1/2} (\log M\log V)^{1/2} - O_d(1) - O(\log Z).
\end{equation}
If, for some number $\delta >0$, we have
\begin{equation}
\label{eq:M-bound1}
\frac{\sqrt{2}}{3}\left(\frac{\log A}{\log Y}\right)^{1/2} (\log M\log V)^{1/2} \geq (1+\delta) \log A,
\end{equation}
then \eqref{eq:dlogA} will indeed hold as soon as $Z \gg_\delta 1$ and $d \gg_\delta 1$. The condition \eqref{eq:M-bound1} may be rewritten as
\[
\log M \geq \frac{9}{2}(1+\delta)^2 \frac{\log A \log Y}{\log V}.
\]
Redefining $\delta$, and noting that $V \geq Z$, we may summarize our findings as follows (cf. \cite[Lemma 1]{Heath-Brown12}).

\begin{lemma}
\label{lem:auxiliaryform}
If $M$ satisfies
\begin{equation}
\label{eq:logM}
\log Z \geq \log M \geq \max\left\{ \frac92(1 + \delta)\frac{\log A \log Y}{\log Z}, \log Y \right\}
\end{equation}
for a given $\delta >0$, then the following holds. For any interval $I = (s_0,s_0+M^{-1}]$ there is a non-zero bihomogeneous polynomial $B_I(\xx;\yy)$ such that
\[
B_I(a,b;x,y) = 0
\]
for every solution to \eqref{eq:dioph} such that $x/y \in I$. Moreover, $B_I$ has total degree $O_\delta(1)$ and coefficients of size $O(Z^\kappa)$, for some constant $\kappa = \kappa(\delta)$.
\end{lemma}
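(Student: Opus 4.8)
The plan is to reorganize the calculation that precedes the statement of Lemma~\ref{lem:auxiliaryform} into a clean argument, so the proof amounts to little more than verifying that the hypothesis \eqref{eq:logM} implies the condition \eqref{eq:M-bound1} discussed above. First I would recall the set-up: fix an interval $I=(s_0,s_0+M^{-1}]$, let $R=\{\xx_1,\dotsc,\xx_J\}$ be the solutions of \eqref{eq:dioph} with $x_j/y_j\in I$, and let $f_1,\dotsc,f_H$ with $H=(d+1)(e+1)$ enumerate the bidegree-$(d,e)$ monomials in four variables, where the degrees $d,e$ will be chosen at the end. If $J<H$ there is nothing to prove, so assume $J\geq H$ and pick any $H$ of the solutions; it suffices to show that every $H\times H$ subdeterminant $\Delta_1=\det(f_i(\xx_j))$ vanishes, and since $\Delta_1\in\ZZ$ it is enough to prove $|\Delta_1|<1$.

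Next I would carry out the chain of estimates exactly as in the excerpt: factor out $\prod_j a_j^d y_j^e$ to pass from $\Delta_1$ to $\Delta_2=\det(t_j^{\alpha_i}s_j^{\beta_i})$, giving $|\Delta_1|\ll A^{dH}Y^{eH}|\Delta_2|$ via \eqref{Delta_1}; substitute $s_j=s_0+u_j$ and replace the rows by the polynomials $g_i(u,v)=f_i(1,(s_0+u)^k-v,s_0+u,1)$, which have degree at most $kd+e$ and coefficients of size $O_{k,d,e}(1)$, with $|u_j|\ll M^{-1}$ and $|v_j|\ll V^{-1}$ where $V=AY^k$; then apply Lemma~3 of \cite{Heath-Brown09} to get $\Delta_2\ll_H\prod_{i=1}^H M_i$ with $M_i=m_i(M^{-1},V^{-1})$; and finally run the lattice-point count for $T=\{(j,l)\in\NN^2:j\log M+l\log V\leq\log W\}$, $W=M_H^{-1}$, which under the size assumptions $\log V,\log M\asymp\log Z$ yields $\log W=H^{1/2}(2\log M\log V)^{1/2}+O(\log Z)$ and hence
\[
\log|\Delta_2|\leq O_H(1)-\frac{2\sqrt2\,H^{3/2}}{3}(\log M\log V)^{1/2}+O(H\log Z).
\]
Combining this with $|\Delta_1|\ll A^{dH}Y^{eH}|\Delta_2|$, the inequality $|\Delta_1|<1$ reduces to
\[
dH\log A+eH\log Y\leq\frac{2\sqrt2\,H^{3/2}}{3}(\log M\log V)^{1/2}-O_H(1)-O(H\log Z).
\]

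I would then choose the degrees. Setting $e=\lfloor d\log A/\log Y\rfloor$ balances the two leading terms on the left (so $d\ll e\ll d$ by the assumptions on the sizes of $A$ and $Y$), and the required inequality collapses to \eqref{eq:dlogA}, namely
\[
d\log A\leq d\,\frac{\sqrt2}{3}\Bigl(\frac{\log A}{\log Y}\Bigr)^{1/2}(\log M\log V)^{1/2}-O_d(1)-O(\log Z).
\]
Here is where the hypothesis does its work: the condition \eqref{eq:M-bound1}, i.e.\ $\tfrac{\sqrt2}{3}(\log A/\log Y)^{1/2}(\log M\log V)^{1/2}\geq(1+\delta)\log A$, is algebraically equivalent to $\log M\geq\tfrac92(1+\delta)^2\tfrac{\log A\log Y}{\log V}$, and since $V\geq Z$ this is implied (after relabelling $\delta$) by the lower bound on $\log M$ in \eqref{eq:logM}; the upper bound $\log M\leq\log Z$ and the second lower bound $\log M\geq\log Y$ are exactly the size constraints used to guarantee $\log V,\log M\asymp\log Z$ and $M\gg Y/X$. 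Under \eqref{eq:M-bound1} the right-hand side of \eqref{eq:dlogA} exceeds $d(1+\delta)\log A$ minus lower-order terms, so for $d$ large enough in terms of $\delta$ and $Z$ large enough in terms of $\delta$ the inequality holds, forcing $\Delta_1=0$. Standard linear algebra then produces a nonzero bihomogeneous $B_I$ of bidegree $(d,e)$, hence total degree $O_\delta(1)$, vanishing at all the $\xx_j$; the bound $O(Z^\kappa)$ on its coefficients follows as in \cite{Heath-Brown12} from Cramer's rule applied to the $(H-1)\times(H-1)$ minors, since the entries $f_i(\xx_j)$ are integers of size $O(Z^{O_\delta(1)})$. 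The only genuinely delicate point is bookkeeping the various $O_H(1)$ and $O(H\log Z)$ error terms so that they are dominated once $d$ (and therefore $H$) is taken large relative to $\delta$ but fixed before $Z\to\infty$; everything else is a direct transcription of the computation already displayed.
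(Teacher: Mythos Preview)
Your proposal is correct and follows essentially the same approach as the paper: the paper presents the entire determinant-method computation in the text preceding Lemma~\ref{lem:auxiliaryform}, and the lemma is then stated as a summary of what has just been established, so your ``proof'' is precisely a reorganization of that derivation. The only addition you make is to spell out why the auxiliary constraints $\log M\leq\log Z$ and $\log M\geq\log Y$ are needed (to ensure $\log M,\log V\asymp\log Z$), and to sketch the Cramer's-rule argument for the coefficient bound that the paper simply attributes to \cite{Heath-Brown12}; both are in the spirit of the original.
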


Our aim is now to estimate the contribution to \label{comment7} $N(X,Y,Z)$ from each interval $I$. As in \cite{Heath-Brown12}, we may assume that the polynomial $B_I(\xx;\yy)$ is absolutely irreducible, with coefficients of size at most $O(Z^\kappa)$. If $I = (s_0,s_0+M^{-1}]$, then the points $(a,b,x,y)$ of interest certainly satisfy 
\begin{equation}
\label{eq:parallelogram}
|y|\leq 2Y, \quad |x-s_0y|\leq 2YM^{-1}. 
\end{equation}
Thus, we now wish to bound the number $N_I$ of points $(a,b,x,y) \in \ZZ^4$ in the region defined by \eqref{eq:parallelogram}  that satisfy the equations
\begin{align}
\label{eq:original}
ax^k-by^k & = 1 \text{ and} \\
\label{eq:B_I}
B_I(a,b;x,y) & = 0.
\end{align}

We shall make a coordinate change in order to take advantage of the thinness of the parallelogram \eqref{eq:parallelogram}. Following \cite{Heath-Brown12}, we consider the lattice
\[
\Lambda_I = \left\{\left(\frac{M}{2Y}(x-s_0y),\frac{1}{2Y}y\right);(x,y) \in \ZZ^2\right\},
\]
with determinant $\det(\Lambda_I) = M/(4Y^2)$. Much as in \cite{Heath-Brown12}, we choose $\bf{g}^{(1)},\bf{g}^{(2)} \in \Lambda_I$ so that $|\bf{g}^{(1)}|$ is minimal among non-zero vectors of $\Lambda_I$, and $|\bf{g}^{(2)}|$ is minimal among vectors not parallel to $\bf{g}^{(1)}$. Then $\bf{g}^{(1)},\bf{g}^{(2)} \in \Lambda_I$ form a basis for $\Lambda_I$. Furthermore, we have $|\bf{g}^{(1)}||\bf{g}^{(2)}| \asymp \det(\Lambda_I)$, and if $\xx \in \Lambda_I$ is expressed in this basis as $\xx= \lambda_1 \bf{g}^{(1)} + \lambda_2 \bf{g}^{(2)}$, then $|\lambda_i| \ll |\xx|/|\bf{g}^{(i)}|$.  Thus, taking $L_i$ to be suitable multiples of $|\bf{g}^{(i)}|^{-1}$ for $i = 1,2$, we have
\(
\lambda_1 \bf{g}^{(1)} + \lambda_2 \bf{g}^{(2)} \in [-1,1]^2 \text{ only if }  |\lambda_i| \leq L_i,
\)
and furthermore
\[
L_1 \gg L_2, \quad L_1L_2 
\asymp Y^2M^{-1}.  
\]

We have $\Lambda_I = \mathsf{L}\,\ZZ^2$, where 
\[
\mathsf{L} = \begin{pmatrix}
              \frac{M}{2Y} & \frac{-s_0 M}{2Y}\\[0.3em]
              0 & \frac{1}{2Y}
             \end{pmatrix}.
\]
By the above, the vectors $\mathsf{L}^{-1} \bf{g}^{(1)}, \mathsf{L}^{-1}\bf{g}^{(2)}$ constitute a basis for $\ZZ^2$. If the new coordinates $(\lambda_1,\lambda_2)$ are defined by 
$\left(x,y\right) = \lambda_1 \mathsf{L}^{-1} \bf{g}^{(1)} + \lambda_2 \mathsf{L}^{-1} \bf{g}^{(2)}$, 
we may now bound $N_I$ from above by the number of solutions $(a,b,\lambda_1,\lambda_2) \in \ZZ^4$ to  
\begin{equation}
\label{eq:lambda-conditions}
F(a,b,\lambda_1,\lambda_2) = 1, \quad G(a,b,\lambda_1,\lambda_2) = 0,  \quad |\lambda_i| \leq L_i, 
\end{equation} 
where $F$ is bihomogeneous of bidegree $(1,k)$ and $G$ is bihomogeneous of bidegree $(d,e)$, say, and where $F$ and $G$ again have integer coefficients bounded by a power of $Z$. We shall now prove the following estimate.


\begin{lemma}
\label{lem:N_I}
In the above notation, we have
\[
N_I \ll_{\delta,\epsi} Z^\epsi L_1^{1+\epsi}.
\] 
\end{lemma}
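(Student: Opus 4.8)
The plan is to turn the estimate into a count of rational points of bounded height on an algebraic curve, and then to quote Heath-Brown's uniform bound for such points. As in the set-up preceding the lemma, I would assume that $G$ is absolutely irreducible, so that $\mathcal{C} := \{G=0\}$ is an absolutely irreducible curve of bidegree $(d,e)$ in $\mathbb{P}^1\times\mathbb{P}^1$; its image under the Segre embedding into $\mathbb{P}^3$ is then an absolutely irreducible curve of degree $D := d+e$. Since the only constraint on $d$ in Lemma \ref{lem:auxiliaryform} is $d \gg_\delta 1$, I may take $d$, hence $D$, as large as I like in terms of $\delta$ and $\epsi$; after discarding the degenerate case $e=0$ (in which $G$ is an absolutely irreducible form in $a,b$ alone, necessarily linear, so that $(a,b)$ is confined to a single primitive vector up to sign and the remaining equation $ax^k-by^k=1$ — a form of degree $k$ in the $\lambda_i$ — has $\ll L_1$ solutions in the box), I may assume $D\geq 2$. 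I may also assume $L_1\geq 1$, since otherwise $|\lambda_i|\leq L_i<1$ forces $\lambda_1=\lambda_2=0$, hence $(x,y)=(0,0)$, contradicting $X<x$.

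The crucial observation is that, although $\{F=1\}\cap\{G=0\}$ is \emph{a priori} a surface, the equation $F=1$ collapses its integral points onto a curve. Indeed $F=1$ reads $ax^k-by^k=1$, where $x,y$ are fixed integral linear forms in $\lambda_1,\lambda_2$; hence $\gcd(a,b)=1$, and since $\gcd(x,y)^k\mid 1$ and $(\lambda_1,\lambda_2)\mapsto(x,y)$ is a $\ZZ$-linear isomorphism, also $\gcd(\lambda_1,\lambda_2)=1$. Thus every solution $(a,b,\lambda_1,\lambda_2)$ to \eqref{eq:lambda-conditions} determines a rational point $P=([a:b],[\lambda_1:\lambda_2])$ on $\mathcal{C}$, with $(a,b)$ and $(\lambda_1,\lambda_2)$ themselves primitive representatives; and conversely, since $F$ is bihomogeneous of bidegree $(1,k)$, for each $P$ there are only $O(1)$ solutions lying over it (the only freedom is in the two signs, which must be compatible with $F(a,b,\lambda_1,\lambda_2)=1$). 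Therefore $N_I\ll\#\{P\in\mathcal{C}(\mathbb{Q}) : P\text{ arises from a solution in the box}\}$.

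It remains to control the height of such a $P$. By \eqref{eq:lambda-conditions} together with the restriction $Z<by^k\leq 2Z$ from \eqref{eq:dioph}, one has $a\asymp A=ZX^{-k}$ and $b\asymp ZY^{-k}\ll A$ (using $X\leq Y$), while $|\lambda_i|\leq L_i\leq L_1$; since the two projective coordinates of $P$ have coprime representatives, the height of the Segre image of $P$ in $\mathbb{P}^3$ is thus $H(P)=\max(|a|,|b|)\max(|\lambda_1|,|\lambda_2|)\ll AL_1\ll Z^2$. Heath-Brown's uniform bound for the number of rational points of height at most $H$ on an absolutely irreducible curve of degree $D$ in projective space — namely $O_{D,\epsi}(H^{2/D+\epsi})$ — now yields $N_I\ll_{D,\epsi}(AL_1)^{2/D+\epsi}$, and choosing $D$ large enough in terms of $\epsi$ makes this $\ll_{\delta,\epsi}Z^\epsi\ll Z^\epsi L_1^{1+\epsi}$, as required (indeed somewhat more).

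I expect the main obstacle to be the structural reduction in the second paragraph: recognising that the coprimality forced by $ax^k-by^k=1$, combined with the bihomogeneity of $F$, collapses the fibres of $\mathcal{C}$ and converts what looks like a point count on a surface into a point count on a curve of controllable — indeed arbitrarily large — degree. After that, the height bookkeeping and the appeal to the curve estimate are routine, the only nuisance being the easy degenerate cases.
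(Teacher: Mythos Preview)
The central gap is your claim that you ``may take $d$, hence $D$, as large as [you] like.'' You cannot. The auxiliary polynomial $B_I$ produced by Lemma~\ref{lem:auxiliaryform} can indeed be arranged to have arbitrarily large bidegree, but the $G$ you actually work with is, as you yourself note, an \emph{absolutely irreducible factor} of (the coordinate-changed) $B_I$. A bihomogeneous polynomial of large bidegree factors over $\bar{\mathbb{Q}}$ into bihomogeneous pieces whose bidegrees you have no control over; nothing prevents the factor that happens to vanish on your solutions from having bidegree $(0,1)$, $(1,0)$, or $(1,1)$. With $D=d+e$ possibly this small, your Segre-embedding bound $N_I \ll_{D,\epsi}(AL_1)^{2/D+\epsi}$ is far too weak: already for $(d,e)=(1,1)$ it gives $(AL_1)^{1+\epsi}$, and $A=ZX^{-k}$ can be comparable to $Z$.

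This is precisely why the paper's proof is a case analysis on $(d,e)$. For $d\geq 2$ and $e\geq 1$ it quotes \cite[Lemma~2]{Heath-Brown12}, which in the $\mathbb{P}^1\times\mathbb{P}^1$ setting yields the sharper bound $O(L_1^{2/d+\epsi}\Vert G\Vert^\epsi)$ --- crucially only the $\lambda$-height enters, with exponent $2/d$ rather than $2/(d+e)$ --- and this is $\ll Z^\epsi L_1$ once $d\geq 2$. The residual cases $d=0$, $e=0$, $d=1$ are disposed of by hand, each time bringing in the equation $F=1$ (as a Thue-type constraint once one projective factor is pinned down). Your treatment of $e=0$ is correct and matches the paper's, but the analogous treatments of $d=0$ and $d=1$ are missing, and your main argument does not cover them.
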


The proof of this estimate is divided into different cases according to the value of $d$. Clearly we may assume that $L_1 \geq 1$, as otherwise $N_I$ will vanish. In case $\min(d,e) \geq 1$, Lemma 2 in \cite{Heath-Brown12} then states that the number of solutions to the equation $G(a,b,\lambda_1,\lambda_2) = 0$ satisfying $\gcd(a,b)=\gcd(\lambda_1,\lambda_2) = 1$ and $|\lambda_i| \leq L_i$ is 
\[
O_{d,e,\epsi}\left(L_1^{2/d+\epsi}\Vert G\Vert^\epsi\right). 
\]
This establishes the bound in Lemma \ref{lem:N_I} as soon as $d \geq 2$ and $e \geq 1$. (Indeed, the indivisibility conditions are automatically satisfied on account of the first equation in \eqref{eq:lambda-conditions}.) It remains to settle the cases where $d=0$, $d=1$ or $e=0$.

Assume first that $d=0$, so that $G(a,b,\lambda_1,\lambda_2) = H(\lambda_1,\lambda_2)$, say. Then there are only $O(1)$ possibilities for $(\lambda_1,\lambda_2)$, and thus for $(x,y)$. For fixed $(x,y)$, the number of pairs $(a,b)$ satisfying \eqref{eq:dioph} and $|ax^k|\leq Z$ is
\[
\ll 1 + \frac{Z}{(XY)^k} \ll 1,
\]
by the assumptions in \eqref{eq:assumptions}.

Next, if $e=0$, the equation $G=0$ determines at most $d$ pairs $(a,b)$, and for each such choice, the first equation in \eqref{eq:lambda-conditions} reads $\tilde F(\lambda_1,\lambda_2) = 1$, for some homogeneous polynomial $\tilde F$. We cannot rule out the possibility that $\tilde F$ is a power of a single linear form, but even the trivial bound $O(L_1)$ for the number of solutions $(\lambda_1,\lambda_2)$ suffices for Lemma \ref{lem:N_I}.

Finally, in the case $d=1$, we argue exactly as in \cite{Heath-Brown12}. If we write
\[
G(a,b,\lambda_1,\lambda_2) = a G_1(\lambda_1,\lambda_2) + b G_2(\lambda_1,\lambda_2), 
\]
the condition $G=0$ implies that
\begin{equation}
\label{eq:commonfactor}
G_1(\lambda_1,\lambda_2) = -qb, \quad G_2(\lambda_1,\lambda_2) = qa 
\end{equation}
where the integer $q$ divides the resultant of $G_1$ and $G_2$. As the coefficients of $G$ are bounded by powers of $Z$, we have only $O(Z^\epsi)$ choices for $q$. For each choice, substituting \eqref{eq:commonfactor} into the equation  $F(a,b,\lambda_1,\lambda_2)= 1$ gives a Thue equation $\tilde F(\lambda_1,\lambda_2) = q$, which again can have at most $O(L_1)$ solutions.
This completes the proof of Lemma \ref{lem:N_I}. (Note that the original equation $ax^k-by^k = 1$ was discarded in most cases.)

In view of Lemma \ref{lem:N_I}, the above transformation is most useful when $L_1$ is not too big, that is, when the shortest vector in $\Lambda_I$ is not too short. To sum up the contribution from all the intervals $I$, we thus need to know how often $L_1$ is of a certain size. It is now convenient to assume that the intervals $I$ in the above subdivision are defined by taking $s_0 = z/M$ for an integer $z \ll MX/Y$. In fact, by assuming that $Z \gg 1$, so that $M\gg Y/X$, we may ensure that only values $z > 0$ are needed.

\begin{lemma}
\label{lem:countshortestvectors}
We have $Y/M^{1/2} \ll L_1 \ll Y$. Moreover, the number of intervals $I=(s_0,s_0+M^{-1}]$ for which $L \leq L_1 \leq 2L$ is at most 
\[
O_\epsi\left(Z^\epsi\left(\frac{Y}{L}+\frac{XY}{L^2}\right)\right).
\]
\end{lemma}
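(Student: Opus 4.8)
The plan is to analyze the geometry of the lattice $\Lambda_I$ as $s_0 = z/M$ varies, controlling the length $L_1 \asymp |\mathbf{g}^{(1)}|^{-1}$ of the reciprocal of the shortest vector. Recall that $\Lambda_I$ consists of the points $\bigl(\tfrac{M}{2Y}(x - s_0 y), \tfrac{y}{2Y}\bigr)$ with $(x,y) \in \ZZ^2$. A nonzero vector in $\Lambda_I$ of small Euclidean length corresponds to a pair $(x,y) \neq (0,0)$ with $|y| \ll Y$ and $|x - s_0 y| = |x - zy/M| \ll Y/M$, i.e.\ an integer $y$ with $|y| \ll Y$ such that $zy$ is congruent modulo $M$ to some integer of absolute value $\ll Y$; equivalently $\|zy/M\| \ll Y/M$ where $\|\cdot\|$ denotes distance to the nearest integer. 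The length of such a vector is then $\asymp \max\bigl(|y|/Y,\ M|x-s_0y|/Y\bigr)$, and $|\mathbf{g}^{(1)}|$ is the minimum of this over all such nonzero $(x,y)$.

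First I would establish the two-sided bound $Y/M^{1/2} \ll L_1 \ll Y$. The lower bound $|\mathbf{g}^{(1)}| \ll Y^{-1}$, hence $L_1 \gg Y$ is false --- let me restate: since $L_1 L_2 \asymp Y^2 M^{-1}$ and $L_1 \gg L_2$, we get $L_1 \gg Y/M^{1/2}$ immediately; and the shortest vector always satisfies $|\mathbf{g}^{(1)}| \gg (\det \Lambda_I)^{1/2} = (M/(4Y^2))^{1/2}$ is wrong in general, so instead I use $|\mathbf{g}^{(1)}| \ll (\det \Lambda_I)^{1/2}$ by Minkowski, giving $L_1 \gg |\mathbf{g}^{(1)}|^{-1} \gg Y/M^{1/2}$, consistent with the above. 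For the upper bound $L_1 \ll Y$: the vector coming from $(x,y) = (1,0)$ lies in $\Lambda_I$ and has length $\tfrac{M}{2Y} \gg 1/Y \cdot (\text{something})$; more carefully, since $s_0 \asymp X/Y \leq 1$, one checks that there is always a lattice vector of length $\ll 1$, namely from a suitable small $(x,y)$, wait --- the cleanest route is: $\Lambda_I$ contains the image of $(0,1)$, which is $(-s_0/(2Y)\cdot M, 1/(2Y))$, of length $\asymp M s_0 /Y \asymp MX/Y^2$, which can be large; better to use that for any $s_0$ there is a rational approximation $z'/y'$ with $y' \leq M$ and $|s_0 - z'/y'| \leq 1/(y' M)$ wait $s_0$ is already of that form. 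I would instead simply note $|\mathbf{g}^{(1)}| \geq \lambda_1(\Lambda_I)$ and that the vector from $(x,y)=(1,1)$ has $y$-component $1/(2Y)$ and first component $\tfrac{M}{2Y}(1-s_0)$, which is $\gg 1$, not helpful; the honest bound $L_1 \ll Y$ is equivalent to $|\mathbf{g}^{(1)}| \gg 1/Y$, which holds because the $y$-coordinate of any nonzero vector with $y \neq 0$ is $\geq 1/(2Y)$, and if $y = 0$ then the vector is $(\tfrac{M}{2Y} x, 0)$ with $|x| \geq 1$, of length $\geq \tfrac{M}{2Y} \gg 1/Y$ since $M \gg Y/X \gg 1$. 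So $L_1 \ll Y$ follows.

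The main work is the counting statement. Fix a dyadic parameter $L$ with $Y/M^{1/2} \ll L \ll Y$. An interval $I$ with $s_0 = z/M$ contributes to the count for scale $L$ only if there is a nonzero $(x,y) \in \ZZ^2$ with $|\mathbf{g}^{(1)}| \asymp L^{-1}$, i.e.\ with $|y| \ll Y/L$ and $|x - zy/M| \ll Y/(LM)$; in particular (taking the smaller requirement into account) one has $\|zy/M\| \ll Y/(LM)$ for some integer $0 < |y| \ll Y/L$. I would count, for each such $y$, the number of $z$ with $1 \leq z \ll MX/Y$ satisfying $\|zy/M\| \ll Y/(LM)$. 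Writing $g = \gcd(y,M)$, the residues $zy \bmod M$ run over multiples of $g$, so the condition $\|zy/M\| \leq c Y/(LM)$ picks out those $z$ for which $zy \bmod M$ lies in an interval of length $\ll Y/L$ around $0$; the number of such $z$ in a range of length $\ll MX/Y$ is $\ll \bigl(\tfrac{g}{M} \cdot \tfrac{Y}{L} + 1\bigr)\cdot \tfrac{MX}{Y}\cdot\tfrac{g}{M} + \ldots$ --- more precisely $\ll \tfrac{MX}{Yg}\cdot\bigl(\tfrac{gY}{ML} + 1\bigr) = \tfrac{X}{L} + \tfrac{MX}{Yg}$. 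Summing over $1 \leq |y| \ll Y/L$ and over divisors $g \mid M$ (there are $O(Z^\epsilon)$ of them, and for each $g$ the count of $y$ with $\gcd(y,M)=g$ and $|y| \ll Y/L$ is $\ll \tfrac{Y}{gL}+1$), I expect to obtain a bound of the shape $Z^\epsilon\bigl(\tfrac{Y}{L}\cdot\tfrac{X}{L} + \tfrac{Y}{L} + \ldots\bigr)$; one must bookkeep carefully to land exactly on $Z^\epsilon\bigl(\tfrac{Y}{L} + \tfrac{XY}{L^2}\bigr)$, using $X \leq Y$ and the ranges \eqref{eq:assumptions} to absorb stray lower-order terms. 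The main obstacle is precisely this bookkeeping: making the counting of $z$ in an arithmetic-progression-cum-short-interval condition uniform in $y$ and $g$, and checking that the "$+1$" terms (the contributions when the interval is shorter than one period) sum to something dominated by $Z^\epsilon(Y/L + XY/L^2)$ rather than overwhelming it. A clean way to organize this is to observe that a pair $(x,y)$ realizing $|\mathbf{g}^{(1)}| \asymp L^{-1}$ is essentially unique up to sign for each $I$ (two short non-parallel vectors would force $L_1 L_2 \ll Y^2/L^2$, contradicting $L_1 L_2 \asymp Y^2/M$ unless $L \gg M^{1/2}$, a range one handles separately), so one may instead sum over \emph{primitive} pairs $(x,y)$ with $|y| \ll Y/L$, $|x| \ll Y/L \cdot X/Y = X/L$ wait $x \asymp s_0 y \asymp (X/Y)(Y/L) = X/L$, and for each such primitive pair count the $z \ll MX/Y$ with $Mx \equiv zy \pmod{\ldots}$, which is a single congruence condition on $z$ modulo $M/\gcd(y,M)$ intersected with the short-interval constraint, yielding $\ll X/L + \gcd(y,M)$ choices of $z$; summing this over the $\ll (X/L)(Y/L)$ primitive pairs and handling the gcd sum via $\sum_{|y| \ll Y/L}\gcd(y,M) \ll Z^\epsilon Y/L$ then gives the claimed bound directly.
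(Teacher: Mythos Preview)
Your argument for the two-sided bound $Y/M^{1/2} \ll L_1 \ll Y$ is correct once extracted from the self-corrections: the lower bound follows from $L_1 \geq L_2$ together with $L_1 L_2 \asymp Y^2/M$, and the upper bound from the fact that every nonzero vector of $\Lambda_I$ has length $\gg 1/Y$ (if $y\neq 0$ the second coordinate is $\geq 1/(2Y)$; if $y=0$ then $x\neq 0$ and the first coordinate is $\geq M/(2Y)\gg 1/Y$). This is exactly what the paper does.

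For the counting, your approach is viable but differs from the paper's, and yours is more laborious. You propose to fix the primitive pair $(x,y)$ realising the short vector and, for each, count admissible $z$; this drags you into gcd/arithmetic-progression bookkeeping that you rightly identify as the main obstacle. Your stated count ``$\ll X/L + \gcd(y,M)$ choices of $z$'' is not correct as written: for fixed $(x,y)$ with $y\neq 0$ the condition $|Mx - zy|\ll Y/L$ is simply a short interval for $z$ of length $\ll Y/(L|y|)$, giving $\ll 1 + Y/(L|y|)$ choices, with no $\gcd(y,M)$ involved. Summing this over $|x|\ll X/L$, $0<|y|\ll Y/L$ (and handling $x=0$ separately) does recover the bound, but only after some care.

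The paper sidesteps all of this with one observation. Writing the short-vector condition as
\[
y_1 z = M x_1 + O(Y/L), \qquad |y_1|\ll Y/L,
\]
one reads off $|x_1|\ll X/L$ from $|y_1 z|\ll MX/L$. If $x_1 = 0$, minimality of $\mathbf{g}^{(1)}$ forces $y_1=\pm 1$, whence $|z|\ll Y/L$, accounting for the $O(Y/L)$ term. If $x_1\neq 0$, then for each of the $O(X/L)$ values of $x_1$ the \emph{product} $y_1 z$ is a nonzero integer lying in an interval of length $O(Y/L)$; a divisor-function estimate then gives $O(Z^\epsi)$ factorisations of each such value as $y_1\cdot z$, yielding $O(Z^\epsi XY/L^2)$ intervals in total. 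The trick you are missing is to treat $y_1 z$ as a single unknown integer and invoke the divisor bound, rather than separating $y_1$ and $z$; this eliminates the gcd analysis entirely.
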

 
\begin{proof}
Suppose that $\bf{g}^{(1)} = \left(\frac{M}{2Y}(x_1-s_0y_1),\frac{1}{2Y}y_1\right)$. Clearly we have $|\bf{g}^{(1)}|\gg Y^{-1}$, whence the upper bound for $L_1$. The lower bound follows from the fact that $L_1 \geq L_2$ and $L_1L_2 \gg Y^2M^{-1}$. 

By the definition of $L_1$ we have
\[
L_1(x_1-s_0 y_1) \ll \frac Y M, \quad L_1 y_1 \ll Y. 
\]
Suppose now that $L \leq L_1 \leq 2L$. With $z$ as defined above, it follows that 
\begin{equation}
\label{eq:countshortestvectors}
y_1 z = M x_1 + O\left(\frac{Y}{L}\right). 
\end{equation}
As the left hand side of \eqref{eq:countshortestvectors} is $\ll MX/L$, we must have $x_1 \ll X/L$. If $x_1 = 0$, then by definition of $\bf{g}^{(1)}$ we must have $y_1 = \pm 1$, leaving at most $O(Y/L)$ choices for $z$. 

For each choice of $x_1\neq 0$, there are at most $O(Y/L)$ possible choices for the right hand side of \eqref{eq:countshortestvectors}. Moreover, in this case \eqref{eq:countshortestvectors} implies that
\begin{equation}
\label{eq:y1z_nonzero}
M \ll y_1 z \ll \frac{MX}{L}, 
\end{equation}
and in particular $y_1z \neq 0$. A divisor function estimate now shows that there are $O(Z^\epsi Y/L)$ possible choices for $y_1$ and $z$ for each choice of $x_1\neq 0$. As $x_1 \ll X/L$, where $X/L \gg 1$ by \eqref{eq:y1z_nonzero}, the contribution from intervals $I$ with $x_1 \neq 0$ is $O(Z^\epsi XY L^{-2})$. 
\end{proof}

Combining Lemmas \ref{lem:N_I} and \ref{lem:countshortestvectors}, we see that the total contribution to $N(X,Y,Z)$ from all intervals such that $L \leq L_1 \leq 2L$ is 
\[
O_{\delta,\epsi}\left(Z^\epsi \left(\frac{XY}{L} + Y \right)\right).
\] 
Let us temporarily assume that $M \ll Y^2$. By dyadic summation over the range $Y/M^{1/2} \ll L \ll Y$, we then obtain the estimate
\begin{equation}
\label{eq:N_prov}
N(X,Y,Z) \ll_{\delta,\epsi} Z^\epsi X M^{1/2} + Z^\epsi Y.
\end{equation}
In the case where $M\gg Y^2$, however, the bound \eqref{eq:N_prov} is trivial. Indeed, as shown above, the contribution to $N(X,Y,Z)$ from each fixed pair $(x,y)$ is at most $O(1)$, so we get
\[
N(X,Y,Z) \ll XY \ll XM^{1/2} 
\]
in this case. We have shown the following result.

\begin{lemma}
\label{lem:N}
Under the assumptions \eqref{eq:assumptions} and $X\leq Y$, we have
\begin{equation*}
N(X,Y,Z) \ll_{\delta,\epsi} Z^\epsi X M^{1/2} + Z^\epsi Y,
\end{equation*}
as soon as $M$ satisfies \eqref{eq:logM}.
\end{lemma}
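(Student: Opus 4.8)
The plan is to assemble Lemma~\ref{lem:N} directly from the three preceding lemmas by a dyadic summation over the sizes $L$ of the shortest lattice vector $L_1$. First I would recall the setup: for each interval $I=(s_0,s_0+M^{-1}]$ with $s_0=z/M$, Lemma~\ref{lem:N_I} bounds the number of relevant quadruples by $O_{\delta,\epsi}(Z^\epsi L_1^{1+\epsi})$, where $L_1=L_1(I)$ is governed by the lattice $\Lambda_I$. Lemma~\ref{lem:countshortestvectors} tells us that $L_1$ always lies in the range $Y/M^{1/2}\ll L_1\ll Y$, and that the number of intervals with $L\le L_1\le 2L$ is at most $O_\epsi(Z^\epsi(Y/L+XY/L^2))$. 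Multiplying these two bounds, the contribution of all intervals with $L_1\asymp L$ to $N(X,Y,Z)$ is $O_{\delta,\epsi}(Z^\epsi L(Y/L+XY/L^2))=O_{\delta,\epsi}(Z^\epsi(Y+XY/L))$, absorbing the harmless $L^\epsi\ll Z^\epsi$.

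Next I would sum this over the dyadic values of $L$ in the permitted range $Y/M^{1/2}\ll L\ll Y$, which accounts for $O(\log Z)$ values of $L$. The term $Y$ contributes $O(Z^\epsi Y)$ after adjusting $\epsi$. The term $XY/L$ is largest when $L$ is smallest, i.e.\ $L\asymp Y/M^{1/2}$, giving $XY/(Y/M^{1/2})=XM^{1/2}$; so the geometric sum over $L$ is dominated by its smallest term and yields $O(Z^\epsi XM^{1/2})$. Together this gives the bound $N(X,Y,Z)\ll_{\delta,\epsi}Z^\epsi XM^{1/2}+Z^\epsi Y$, which is precisely \eqref{eq:N_prov}. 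This argument requires the implicit assumption $M\ll Y^2$ so that the lower endpoint $Y/M^{1/2}$ does not exceed the upper endpoint $Y$ and the dyadic range is non-empty.

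It then remains to handle the complementary case $M\gg Y^2$. Here I would argue, as in the discussion preceding the lemma, that the claimed bound is trivially true: for each fixed pair $(x,y)$ the number of admissible $(a,b)$ is $O(1)$ by \eqref{eq:assumptions}, so $N(X,Y,Z)\ll XY$, and since $M\gg Y^2$ forces $M^{1/2}\gg Y$, we get $XY\ll XM^{1/2}$, which is dominated by the first term in the asserted estimate. Combining the two cases completes the proof; one should also note that the hypothesis \eqref{eq:logM} is exactly what Lemma~\ref{lem:auxiliaryform} requires in order to produce the auxiliary polynomials $B_I$ underlying Lemmas~\ref{lem:N_I} and~\ref{lem:countshortestvectors}, so no new conditions on $M$ are introduced.

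The main obstacle, such as it is, is bookkeeping rather than anything deep: one must be careful that multiplying the $Z^\epsi$-type factors from the two input lemmas and then summing over $O(\log Z)$ dyadic scales still only costs another factor $Z^\epsi$ (so the $\epsi$'s must be chosen and relabelled consistently), and one must verify that the geometric series in $1/L$ is indeed dominated by its extreme term rather than accumulating a spurious logarithm in an essential way. The genuinely substantive inputs---the lattice reduction, the Thue/determinant estimates behind Lemma~\ref{lem:N_I}, and the divisor-function count in Lemma~\ref{lem:countshortestvectors}---have already been established, so the present lemma is essentially their formal combination together with the trivial treatment of the degenerate range $M\gg Y^2$.
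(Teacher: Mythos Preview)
Your proposal is correct and follows essentially the same approach as the paper: multiply the per-interval bound of Lemma~\ref{lem:N_I} against the interval count of Lemma~\ref{lem:countshortestvectors}, sum dyadically over $Y/M^{1/2}\ll L\ll Y$ under the assumption $M\ll Y^2$, and dispose of the complementary range $M\gg Y^2$ by the trivial bound $N(X,Y,Z)\ll XY\ll XM^{1/2}$. One small inaccuracy: Lemma~\ref{lem:countshortestvectors} is a purely lattice-theoretic count and does not itself rely on the auxiliary polynomials $B_I$; only Lemma~\ref{lem:N_I} requires the hypothesis \eqref{eq:logM} via Lemma~\ref{lem:auxiliaryform}.
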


As already remarked, the case $Y \leq X$ may be treated in an entirely similar fashion. (Indeed, upon renaming the variables, this amounts to carrying out the analysis of the present section for the equation $ax^k - by^k = -1$.) Thus, in this case the conclusion of Lemma \ref{lem:N} holds true with $X$ and $Y$ interchanged, and $A$ replaced by $B:=ZY^{-k}$. 

\begin{rem*}
Note that by a direct application of \cite[Lemma 2]{Heath-Brown12}, without the above coordinate transformation, we would have obtained the weaker estimate $N(X,Y,Z) \ll Z^\epsi XM$ in Lemma \ref{lem:N}. 
\end{rem*}


\section{Proof of Theorem \ref{thm:main}}
\label{sec:k-free_proof}

We shall now determine the optimal choice for the parameter $P$, and derive an upper bound for $N(X,Y,Z)$ valid for arbitary $X,Y$ with $XY \gg P$. By our previous remarks, we may assume, without loss of generality, that $X \leq Y$. We may then write $X \approx Z^\alpha$, $Y \approx Z^\beta$ and $P \approx Z^\phi$, where
\[
\alpha \leq \beta \leq \frac1k, \quad \alpha+\beta \geq \phi \geq \frac1k. 
\]
so that the conditions \eqref{eq:assumptions} are satisfied. In order to fulfil the second inequality in \eqref{eq:logM}, we choose $\delta$, depending on $\epsi$, such that $$\frac{9}{2}\delta(1-k\alpha)\beta \leq \epsi,$$ and we take $M \in \NN$ to satisfy 
\[
\max\left\{Z^{\frac{9}{2} (1+\delta) (1-k\alpha)\beta},Z^{\beta}\right\} \leq M \ll \max\left\{Z^{\frac{9}{2}(1+\delta)(1-k\alpha)\beta}, Z^{\beta}\right\}.
\]
Provided that the first inequality in \eqref{eq:logM} also holds, Lemma \ref{lem:N} then gives the estimate
\begin{equation}
\label{eq:N}
\begin{aligned}
N(X,Y,Z) &\ll_\epsi Z^\epsi \left( Z^{\alpha + \frac{9}{4}(1-k\alpha)\beta} + Z^{\alpha + \frac12 \beta} + Z^\beta\right) \\
&\ll_\epsi Z^\epsi \left( Z^{\alpha + \frac{9}{4}(1-k\alpha)\beta} + Z^{3/(2k)} \right). 
\end{aligned}
\end{equation}
Putting $u = k\alpha$, $v = k \beta$ and $w = k \phi$, we are then led to consider the functions
\[
\Phi(u,v) = \frac92(1-u)v \quad \text{and} \quad \Psi(u,v) = u + \frac94\left(1- u\right)v.  
\]
The admissible range for $(u,v)$ is the triangular region $T_w$ defined by
\begin{equation}
\label{eq:region1}
u \leq v \leq 1, \quad u+v \geq w.  
\end{equation}
Provided that $\Phi(u,v) < k$ throughout $T_w$, the condition $M \leq Z$ of \eqref{eq:logM} may certainly be fulfilled by choosing $\delta$ small enough, in which case \eqref{eq:N} yields
\begin{equation*}
N(X,Y,Z) \ll_\epsi Z^\epsi \left( Z^{\psi/k} + Z^{3/(2k)} \right), \quad \text{where} \quad \psi = \max_{(u,v) \in T_w} \Psi(u,v). 
\end{equation*}

To prove Theorem \ref{thm:main}, we shall take $w = 14/9$. We observe that
\begin{align*}
\nabla \Phi &= \left(-\frac92 v, \frac92(1-u) \right) \neq (0,0) \\
\text{and} \quad 
\nabla \Psi &= \left(1 -\frac94 v, \frac94(1-u) \right) \neq (0,0)
\end{align*}
throughout $T_{14/9}$, so the maxima of $\Phi$ and $\Psi$ are attained at the boundary, consisting of the line segments
\begin{gather*}
L_1: \quad v=u, \ \frac79 \leq u \leq 1,\\
L_2: \quad v=1,\ \frac59 \leq u \leq 1,\\
L_3: \quad v=\frac{14}{9} - u,\ \frac59 \leq u \leq \frac79
\end{gather*}
By investigating the behaviour of $\Phi$ and $\Psi$ on these line segments, one may check that both functions in fact attain their maximum at $(5/9,1)$. Thus we indeed have $\Phi(u,v) \leq \Phi(5/9,1) = 2 < k$ throughout $T_{14/9}$, as required, and
\[
\max_{(u,v)\in T_{14/9}} \Psi(u,v) = \Psi(5/9,1) = 14/9.
\]

We conclude that 
\[
N(X,Y,Z) \ll_\epsi Z^{14/(9k) + \epsi}
\]
as soon as $XY \gg Z^{14/(9k)}$. Thus, Theorem \ref{thm:main} now follows from Lemma \ref{lem:cutoff}.

\begin{rem*}
One may improve the exponent $14/9$ slightly by replacing the bound from Lemma \ref{lem:N}, in the case when $Y/X$ is large, with the bound $N(X,Y,Z) \ll Z^{1+\epsi}XY^{-k}$, which may be obtained by a more elementary method (cf. \cite[p. 254]{Heath-Brown84}). However, the saving obtained in this way is small for large $k$. 
\end{rem*}


\bibliographystyle{plain}
\bibliography{ratpoints.bib}

\end{document}